\newtheorem{theorem}{Theorem}
\newtheorem{lemma}[theorem]{Lemma}
\newenvironment{proof}[1][Proof]
 {\begin{trivlist} \item[\hskip \labelsep {\bfseries #1}]}{\end{trivlist}}  \newenvironment{example}[1][Example]
 {\begin{trivlist} \item[\hskip \labelsep {\bfseries #1}]}{\end{trivlist}} 
\newcommand{\qed}{\nobreak \ifvmode \relax \else       \ifdim\lastskip<1.5em \hskip-\lastskip       \hskip1.5em plus0em minus0.5em \fi \nobreak       \vrule height0.75em width0.5em depth0.25em\fi}
\date{}
\begin{document}

\title{Pair correlation of roots of rational functions with rational generating
functions and quadratic denominators }

\author{Khang Tran\\
Department of Mathematics\\
University of Illinois at Urbana-Champaign\\
 \and Alexandru Zaharescu\\
Department of Mathematics\\
University of Illinois at Urbana-Champaign}

\maketitle
\begin{flushright}
\thanks{Dedicated in honor of Mourad Ismail and Dennis Stanton}
\par\end{flushright}
\begin{abstract}
For any rational functions with complex coefficients $A(z), B(z)$ and $C(z)$,
where $A(z)$, $C(z)$ are not identically zero, we consider the sequence of rational
functions $H_{m}(z)$ with generating
function $\sum H_{m}(z)t^{m}=1/(A(z)t^{2}+B(z)t+C(z))$. We provide an
explicit formula for the limiting pair correlation function of the
roots of $\prod_{m=0}^{n}H_{m}(z)$, as $n\rightarrow\infty$, counting
multiplicities, on certain closed subarcs $J$ of a curve
$\mathcal{C}$ where the roots lie. We give an example where the limiting
pair correlation function does not exist if $J$ contains the endpoints
of $\mathcal{C}$.
\end{abstract}
\footnote{Mathematics Subject Classification 2010: 11B05, 11K99.%
}%
\footnote{Key words and phrases: pair correlation, root distribution, generating
functions.%
}%
\footnote{The first author acknowledges support from NSF grant DMS-0838434 \textquotedblright{}EMSW21MCTP:
Research Experience for Graduate Students\textquotedblright{}.%
}%
\footnote{The second author's research was partially supported by NSF grant
DMS-0901621.%
}

\section{Introduction}

The root distribution of certain sequences of rational functions
with complex coefficients
has some peculiar connections with the discriminant of the denominator
of its generating function. In particular, one of the authors \cite{tran}
showed that if the generating function of a sequence of polynomials
is the reciprocal of a quadratic
or cubic polynomial, then the roots of the corresponding sequence
of polynomials form a dense set on an explicit algebraic curve. Moreover,
in several cases, the endpoints of the curve are the roots of the
discriminant of the denominator. For the root distribution of various
sequences of polynomials and their generating functions, see Figure
1, Figure 2 and Figure 3 below. For other connections between discriminants
and the root distribution, see \cite{tran-1}. Our goal here is to
study the pair correlation of the roots of a sequence of rational functions
when the denominator of the generating function is quadratic. Pair
correlation is one of the important statistics concerned with the
local spacing distribution of a sequence, and there are many results
in the literature on the local spacing distribution of a variety of
sequences of interest in number theory (see \cite{axz,axz-1,axz-2,bcz,bcz-1,bocazaha,cz,gallager,hejhal,hooley,katzsarnak,kr,
montgomery,mp,mz,rudnicksarnak,rsz,xz}).

Let $\mathcal{T}$ denote the set of triples $(A(z), B(z), C(z))$ with $A(z), B(z), C(z)$
rational functions of z with complex coefficients, $A(z)$ and $ C(z)$ not
identically zero. For each  $(A(z), B(z), C(z))$ in $\mathcal{T}$ we consider
the sequence of rational functions $H_m(z)$ with generating function
given by
\begin{equation}\label{ES}
\sum_{m=0}^{\infty} H_{m}(z)t^{m}=1/(A(z)t^{2}+B(z)t+C(z)) .
\end{equation}
It is easy to see that there exists a finite set of points in the complex plane which
contains all the poles of all the rational functions $H_m(z)$, $m = 0, 1, \dots$
(the multiplicity of the poles may increase as $m$ increases). By contrast,
if we consider all the zeros of all $H_m(z)$, we obtain an infinite set of complex
numbers which (except in some degenerate cases) is dense on a certain
curve $\mathcal{C}$ which will be described below. This naturally raises 
the question of how these points are distributed along $\mathcal{C}$.

Our goal is to study the pair correlation of the roots of 
$\prod_{m=0}^{n}H_{m}(z)$, counting multiplicities. The rational function
$B^2(z)/A(z)C(z)$ plays an important role in our investigation. In this connection,
let us consider the equivalence relation $\sim$ on $\mathcal{T}$ defined 
such that two elements of $\mathcal{T}$ are equivalent if and only if
the corresponding rational function $B^2(z)/A(z)C(z)$  is the same.
We will say that an element $(A(z), B(z), C(z))$ of $\mathcal{T}$ is in standard form
provided $C(z) = 1$, $A(z)$ and $B(z)$ are polynomials,
and there is no root of $B(z)$ which is simultaneously also 
a double (or higher order) root of $A(z)$.
The last condition above can be restated as saying that 
the greatest common divisor $(B^2(z),A(z))$ of $B^2(z)$ and $A(z)$ is square free.
As examples, the triples $(A(z), B(z), C(z))$
used in Figures 1 and 2 below are in standard form, while the one used
in Figure 3 is not in standard form, since in that example  $(B^2(z),A(z))=z^2$.
It is easy to see that in every equivalence class there is exactly one
element of $\mathcal{T}$ in standard form. Indeed, notice first that
for any $(A(z), B(z), C(z))$ in $\mathcal{T}$,
$(A(z), B(z), C(z))$ and $(A(z)/C(z), B(z)/C(z), 1)$ belong to the same equivalence class.
Also, for any nonzero rational function $E(z)$, the triple
$(E^2(z)A(z)/C(z), E(z)B(z)/C(z), 1)$ belongs to the same equivalence class.
Here one can choose $E(z)$ such that $(E^2(z)A(z)/C(z), E(z)B(z)/C(z), 1)$ is in
standard form. For instance, choose first a polynomial $E(z)$ such that
both $E^2(z)A(z)/C(z)$ and $E(z)B(z)/C(z)$ are polynomials. If
$E(z)B(z)/C(z)$ has a root $\alpha$ which is also a multiple root of  $E^2(z)A(z)/C(z)$,
divide $E(z)$ by $z-\alpha$. Then continue this procedure until all such roots
are eliminated. It follows that any equivalence class contains at least one 
triple in standard form. Next,  if 
$(A_1(z), B_1(z), C_1(z)) \sim (A_2(z), B_2(z), C_2(z))$ and both triples
are in standard form, then $C_1(z) = C_2(z) = 1$ and $B_1^2(z)/A_1(z) = B_2^2(z)/A_2(z)$.
Therefore $B_1^2(z)$ divides $B_2^2(z) A_1(z)$, and since  $(B^2(z),A(z))$ is square free,
this forces $B_1(z)$ to divide $B_2(z)$. Similarly, $B_2(z)$ divides $B_1(z)$, so
$B_1(z) = B_2(z)$ and then $A_1(z) = A_2(z)$. In conclusion, each equivalence class 
contains exactly one triple in standard form. Let us also remark that, given an arbitrary triple 
$(A(z), B(z), C(z))$ in $\mathcal{T}$, the process of finding the unique triple in its equivalence
class which is in standard form described above also provides us with a clear understanding
of how the roots of the corresponding rational functions $H_m(z)$ change. More precisely,
replacing $(A(z), B(z), C(z))$ by $(A(z)/C(z), B(z)/C(z), 1)$ has the effect of multiplying
each $H_m(z)$ by the fixed rational function $C(z)$. Moreover, replacing 
 $(A(z)/C(z), B(z)/C(z), 1)$ by $(E^2(z)A(z)/C(z), E(z)B(z)/C(z), 1)$ has the same effect
 as replacing $t$ by $E(z) t$, that is, has the effect of multiplying each $H_m(z)$
 by $E(z)^m$. In conclusion, the distribution of zeros of the rational functions $H_m(z)$ corresponding
 to a given triple $(A(z), B(z), C(z))$ in $\mathcal{T}$ is exactly the same as the one
 obtained by replacing $(A(z), B(z), C(z))$ by the unique triple in standard form in
 its equivalence class, except at the location of finitely many points in the
 complex plane (where the zeros and poles of C(z) and E(z) above lie).

Taking into account the above discussion, we restrict ourselves in what follows
to study the pair correlation of zeros of $\prod_{k=0}^{m}H_{k}(z)$ for triples
$(A(z), B(z), C(z))$ which are in standard form. Note that in this case all the 
$H_m(z)$ are polynomials, and $H_0(z) = 1$, so we may restrict the above product
to $k\ge 1$. We consider the pair correlation problem on an arbitrary subarc $J$ of the
curve $\mathcal{C}$ on which the roots of $H_{m}(z)$ lie. This restriction
sometimes implies an exclusion of the endpoints of $\mathcal{C}$
from $J$. We will see an explicit example in Section 3 where the
limiting pair correlation function exists on any proper subinterval
which does not contain these endpoints. This function does not exist
if these endpoints belong to $J$. The pair correlation function is
explicitly given in the following theorem.

\begin{theorem}\label{maintheorem}

Let $A(z)$ and $B(z)$ be polynomials in $z$ with complex coefficients,
$A(z)$ not identically zero, such that the greatest common divisor of $B^2(z)$ 
and $A(z)$ is square free, and consider the polynomials
$H_{m}(z)$ with generating function
\[
\sum_{m=0}^{\infty}H_{m}(z)t^{m}=\frac{1}{A(z)t^{2}+B(z)t+1} .
\]
Then the set of roots of all the $H_{m}(z)$
 is dense on a fixed curve $\mathcal{C}$.
Also, let 
\[
h(z)=\frac{B^{2}(z)}{A(z)}
\]
 and let $J$ be a closed subarc of $\mathcal{C}$ such that $h(z)$
is piecewise continuously differentiable on $J$ and the inverse function
exists. The function 
\[
f(t):=h^{-1}(4\cos^{2}\pi t)
\]
 maps a subinterval $I\subset[0,4]$ onto $J$. If $f'(z)\ne0$ on
$I$ then the limiting pair correlation function of the roots of $\prod_{k=1}^{m}H_{k}(z)$,
counting multiplicities, exists on $J$, as $m\rightarrow\infty$,
and is given by 
\begin{equation}
g_{J}(x)=\frac{l(J)}{|I|^{2}}\int_{I}g_{I}\left(\frac{l(J)}{|I||f'(t)|}x\right)\frac{dt}{|f'(t)|},\label{eq:pcJ}
\end{equation}
where 
\begin{equation}
g_{I}(x)=\frac{6}{\pi^{2}x^{2}}\sum_{1\le k\le2x}\sigma(k)\log\frac{2x}{k}\label{eq:pcI}
\end{equation}
and $\sigma$ is the sum of divisors function.

\end{theorem}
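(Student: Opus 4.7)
The plan is to first derive an explicit formula for $H_m$ placing its zeros at preimages of $4\cos^2(k\pi/(m+1))$ under $h$, then reduce the pair correlation on $J$ to one on the parameter interval $I$ by the change of variables $f$, and finally compute the latter by a $\gcd$-based lattice-point count.

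Factoring $A(z)t^2+B(z)t+1 = A(z)(t-\tau_1(z))(t-\tau_2(z))$ with $\tau_{1,2} = r(z)e^{\pm i\phi(z)}$, $r^2 = 1/A$, $\cos\phi = -B/(2\sqrt{A})$, partial fractions give
\[
H_m(z) = r(z)^{-m}\frac{\sin((m+1)\phi(z))}{\sin\phi(z)},
\]
whose zeros are exactly the solutions of $h(z) = 4\cos^2(k\pi/(m+1))$, $k=1,\ldots,m$; the hypothesis that $(B^2,A)$ is square-free excludes cancellations between numerator and denominator. Thus under $f(t) = h^{-1}(4\cos^2\pi t)$, the zeros on $J$ of $\prod_{m=1}^n H_m(z)$ are the image of
\[
\mathcal{S}_n = \{k/M : 2\le M\le n+1,\ 1\le k\le M-1,\ k/M\in I\}
\]
(each element counted with its multiplicity as $k/(m+1)$), and density on $\mathcal{C}$ follows from density of the arguments $4\cos^2(k\pi/(m+1))$ in $[0,4]$. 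Since $f'\ne 0$ and $N_J=N_I$, the relation $|z_i-z_j|_J\approx|f'(t_i)||t_i-t_j|$ gives a local rescaling between normalized distances on $J$ and $I$ by the factor $|f'(t)||I|/l(J)$; averaging the (uniform) pair correlation density $g_I$ over base points in $I$ with the induced $J$-density $1/|f'(t)|$, followed by the density-transformation Jacobian, produces (\ref{eq:pcJ}).

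The main step is the computation of $g_I$. Two distinct points $k_1/M_1 < k_2/M_2$ in $\mathcal{S}_n$ have gap $K/(M_1M_2)$ with $K=k_2M_1-k_1M_2\ge 1$; setting $N=n+1$, the total count $|\mathcal{S}_n|\sim|I|N^2/2$ gives normalized distance $KN^2/(2M_1M_2)$, independent of $|I|$ (which cancels between the normalization and the interval restriction). For fixed $K$ and $(M_1,M_2)$ with $d=\gcd(M_1,M_2)$, the equation $k_2M_1-k_1M_2=K$ has $d+O(1)$ solutions $(k_1,k_2)\in(0,M_1)\times(0,M_2)$ when $d\mid K$, none otherwise. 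Writing $M_i=d\mu_i$ with $(\mu_1,\mu_2)=1$, using $\#\{\mu_i\le U\text{ coprime}\}\sim(6/\pi^2)U^2$ with $U=N/d$, $\sum_{d\mid K}1/d=\sigma(K)/K$, and the region area
\[
\iint_{\substack{\mu_1\mu_2\ge KU^2/(2\xi) \\ \mu_i\le U}}\!d\mu_1\,d\mu_2 = U^2\left[1-\tfrac{K}{2\xi}-\tfrac{K}{2\xi}\log\tfrac{2\xi}{K}\right],
\]
the count of pairs at normalized distance $\le\xi$ becomes $(6N^2/\pi^2)\sum_{K\le 2\xi}(\sigma(K)/K)[\,\cdots]+o(N^2)$. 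Dividing by $N^2/2$ and differentiating in $\xi$, the boundary contribution at $K=2\xi$ vanishes as $\log(2\xi/K)\to 0$, and only the logarithmic term survives:
\[
g_I(\xi) = \frac{6}{\pi^2\xi^2}\sum_{k\le 2\xi}\sigma(k)\log\frac{2\xi}{k}.
\]
The principal technical obstacle is uniform control of the error in the coprime lattice-point count as $K$ varies, together with equidistribution of $\mathcal{S}_n$ in $I$ sufficient to justify the local uniformity of $g_I$; both demand sieve estimates of the type familiar from the Farey-fraction pair-correlation literature.
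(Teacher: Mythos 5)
Your structural reduction is the same as the paper's: identify the zeros of $H_m$ on $J$ with the fractions $k/(m+1)$ landing in $I$ (the paper does this via Ismail's $q$-discriminant, you via the explicit $\sin((m+1)\phi)/\sin\phi$ formula, which is equivalent), transfer the pair correlation from $J$ to $I$ through the parametrization $f$ (the paper quotes Theorem 2 of Alkan--Xiong--Zaharescu for exactly this), and then compute the pair correlation of the multiset $\{p/q:1\le p<q\le Q\}$ restricted to $I$. Your main-term computation for $g_I$ is a correct heuristic and does reproduce \eqref{eq:pcI}. But there is a genuine gap precisely at the step you wave at in your last sentence, and it is not a routine technicality: it is the content of the lemma that occupies most of the paper's proof. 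For fixed $K$ and a pair of denominators $(M_1,M_2)$ with $d=\gcd(M_1,M_2)\mid K$, the number of solutions $(k_1,k_2)$ with $k_1/M_1\in I$ is $d\,|I|+O(1)$, where the $O(1)$ depends on where the residue class $k_1\equiv -K\overline{M_2}\pmod{M_1/d}$ sits relative to $I$. Since the number of relevant pairs $(M_1,M_2)$ is $\asymp N^2$, these $O(1)$ errors sum to $O(N^2)$, the same order as your main term $|I|\,(6N^2/\pi^2)\sum_{K\le 2\xi}(\sigma(K)/K)[\cdots]$; likewise your replacement of the coprimality-constrained count by $(6/\pi^2)\times$area needs an error term uniform in $K$ and in the hyperbolic region. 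So as written the argument does not establish the asymptotic; one must show that the modular inverses $\overline{M_2}\bmod M_1$ equidistribute with a power saving, which is Kloosterman-sum/Weil-bound territory rather than a soft sieve estimate.

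The paper circumvents exactly this by working with smooth test functions $G$ (localizing in $I$) and $H$ (localizing the gap), expanding in Fourier series, applying Poisson summation, and then invoking Lemma 8 of Boca--Cobeli--Zaharescu, which supplies the needed asymptotic for sums over coprime pairs $(d_1,d_2)$ with the inverse $b$ of $d_2$ modulo $d_1$ constrained to a short interval, with error $O(Q^{3/2+\epsilon})$; this is where the arithmetic input actually enters. To complete your proof you would either have to import that lemma (or an equivalent incomplete Kloosterman sum estimate) into your direct count and track the errors uniformly in $K\le 2\xi$ and in $d$, or redo the smoothing argument of the paper. Secondary, smaller issues: your passage from $I$ to $J$ should be justified by a precise statement such as the cited transfer theorem (it requires uniform distribution of the fractions in $I$ and $f$ continuous, piecewise $C^1$ with nonvanishing derivative, hypotheses you use implicitly), and the multiplicity bookkeeping at the finitely many zeros of $A(z)$ (where extra roots of $H_m$ can appear, as in the example $A(z)=z^2$) should be noted as not affecting the limit.
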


The first part of the theorem, stating that the set of roots of all
$H_{m}(z)$ is dense on a fixed curve $\mathcal{C}$ has been established
in \cite{tran}. For the sake of completeness, we will present a proof
in Section 2. The proof of the main part of the theorem involving
the study of pair correlation is given in Section 3. The methodology
of the proof is similar to that used in \cite{axz-1}, and with the
one employed in \cite{axz}, where a more elaborate argument is needed since it addresses the effect of addition on pair correlation functions concerning fractions of bounded height.
Section 4 provides an example of the pair correlation function for
a specific sequence of polynomials.

\section{Distribution of roots on a fixed curve}

In this section we consider the root distribution of the sequence
of polynomials $H_{m}(z)$. We recall the $q$-analogue of the discriminant,
a very useful concept introduced by Mourad Ismail \cite{ismail}.
The $q$-discriminant of a polynomial $P(x)$ of degree $n$ with
the leading coefficient $p$ is 
\[
\mathrm{Disc}_{x}(P(x);q)=p^{2n-2}q^{n(n-1)/2}\prod_{1\le i<j\le n}(q^{-1/2}x_{i}-q^{1/2}x_{j})(q^{1/2}x_{i}-q^{-1/2}x_{j})
\]
where $x_{1},\dots,x_{n}$ are roots of $P(x)$. This $q$-discriminant
equals 0 if and only $x_{i}/x_{j}=q$ for some roots $x_{i},x_{j}$.
In the special case when $q\rightarrow1$, this $q$-discriminant
gives the ordinary discriminant of a polynomial. The following theorem
was established in \cite{tran}.

\begin{theorem}\label{quadratic}

Let
\[
\frac{1}{A(z)t^{2}+B(z)t+1}=\sum H_{m}(z)t^{m},
\]
where $A(z)\ne0$. The roots of $H_{m}(z)$ are dense on the curve
$\mathcal{C}$ defined by the conditions 
\begin{eqnarray*}
\mbox{\ensuremath{\Im}}\frac{B^{2}(z)}{A(z)} & = & 0
\end{eqnarray*}
and 
\[
0\le\Re\frac{B^{2}(z)}{A(z)}\le4
\]
in the complex plane.

\end{theorem}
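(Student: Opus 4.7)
The plan is to produce an explicit formula for $H_m(z)$, read off its vanishing locus, and conclude by a density argument. Factoring the denominator of the generating function as
\[
A(z)t^{2}+B(z)t+1=(1-\tau_{1}t)(1-\tau_{2}t),
\]
where $\tau_{1},\tau_{2}$ are the roots of the reciprocal quadratic $\tau^{2}+B(z)\tau+A(z)=0$, a partial-fraction decomposition followed by a geometric expansion yields
\[
H_{m}(z)=\frac{\tau_{1}^{m+1}-\tau_{2}^{m+1}}{\tau_{1}-\tau_{2}},
\]
interpreted by continuity when $\tau_{1}=\tau_{2}$. Thus, away from the discriminant locus $B^{2}(z)=4A(z)$, the equation $H_{m}(z)=0$ is equivalent to $q:=\tau_{1}/\tau_{2}$ being a nontrivial $(m+1)$st root of unity.

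To convert this into a condition on $B^{2}/A$, I use Vieta's formulas $\tau_{1}+\tau_{2}=-B(z)$ and $\tau_{1}\tau_{2}=A(z)$, which give
\[
q+q^{-1}=\frac{\tau_{1}^{2}+\tau_{2}^{2}}{\tau_{1}\tau_{2}}=\frac{B^{2}(z)}{A(z)}-2.
\]
Consequently $q$ lies on the unit circle precisely when $q+q^{-1}\in[-2,2]$, which is exactly the condition that $B^{2}(z)/A(z)$ be real and lie in $[0,4]$, i.e. $z\in\mathcal{C}$. Writing $q=e^{i\theta}$ gives the parametrization $B^{2}(z)/A(z)=4\cos^{2}(\theta/2)$, and the roots of $H_{m}$ on $\mathcal{C}$ are exactly the solutions of
\[
B^{2}(z)=4A(z)\cos^{2}\frac{\pi k}{m+1},\qquad k=1,\ldots,m.
\]

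For the density conclusion, I fix $z_{0}\in\mathcal{C}$ and set $r_{0}:=B^{2}(z_{0})/A(z_{0})\in[0,4]$. Since the set $\{4\cos^{2}(\pi k/(m+1)):1\le k\le m,\ m\ge 1\}$ is dense in $[0,4]$, I can choose $m_{j}\to\infty$ and $k_{j}$ with $r_{j}:=4\cos^{2}(\pi k_{j}/(m_{j}+1))\to r_{0}$. The polynomials $P_{j}(z):=B^{2}(z)-r_{j}A(z)$ converge coefficient-wise to $P_{\infty}(z):=B^{2}(z)-r_{0}A(z)$, which vanishes at $z_{0}$, so by the standard continuity of roots under coefficient perturbation $P_{j}$ has a zero $z_{j}\to z_{0}$, and any such $z_{j}$ is a root of $H_{m_{j}}(z)$. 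The main subtlety I expect is the treatment of the exceptional finite set consisting of the branch points where $\tau_{1}=\tau_{2}$ (the endpoints of $\mathcal{C}$, corresponding to $B^{2}=0$ or $B^{2}=4A$), the zeros of $A(z)$, and the critical points of $B^{2}/A$ along $\mathcal{C}$; near these the parametrization $z\leftrightarrow\theta$ becomes degenerate or multivalued, and density there must be verified by direct perturbation rather than through a local diffeomorphism. Everything else reduces to a routine verification.
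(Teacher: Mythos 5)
Your proposal is correct and follows essentially the same route as the paper: both reduce $H_m(z)=0$ (away from the exceptional set) to the ratio $q$ of the two roots of the quadratic in $t$ being a nontrivial $(m+1)$st root of unity --- the paper phrases this via Ismail's $q$-discriminant, you via Vieta --- and both then read off $B^2(z)/A(z)=q+q^{-1}+2\in[0,4]$, which is exactly membership in $\mathcal{C}$. The only divergence is in the density step, and it is cosmetic: the paper applies the open mapping theorem to the analytic function $B^2/A$ near a point of $\mathcal{C}$, while you apply Hurwitz-type continuity of roots to $B^2(z)-r_jA(z)$ with $r_j=4\cos^2(\pi k_j/(m_j+1))\to r_0$; these are interchangeable ways of saying that nearby target values are attained nearby, and the finite exceptional set you flag (zeros of $A$, branch points) is treated with the same brevity in the paper's own argument.
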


\begin{proof}

Let $z$ be a root of $H_{m}(z)$ with $A(z)\ne0$. Let $t_{1}=t_{1}(z)$
and $t_{2}=t_{2}(z)$ be the roots of $A(z)t^{2}+B(z)t+1$. If $t_{1}=t_{2}$
then $z\in\mathcal{C}$ since $B^{2}(z)=4A(z)$. We consider $t_{1}\ne t_{2}$.
Using partial fractions, we have 
\begin{eqnarray}
\frac{1}{A(z)t^{2}+B(z)t+1} & = & \frac{1}{A(z)(t-t_{1})(t-t_{2})}\nonumber \\
 & = & \frac{1}{A(z)}\sum\frac{t_{1}^{m+1}-t_{2}^{m+1}}{(t_{1}-t_{2})t_{1}^{m+1}t_{2}^{m+1}}t^{m}.\label{eq:quadraticT}
\end{eqnarray}
So the roots of $H_{m}(z)$ are the roots of $t_{1}=qt_{2}$, where
$q$ is an $(m+1)$-th root of unity and $q\ne1$, $A(z)\ne0$. These
roots are the roots of the $q$-analogue of discriminant
\[
\mathrm{Disc}_{t}(A(z)t^{2}+B(z)t+1;q)=q\left(B^{2}(z)-(q+q^{-1}+2)A(z)\right).
\]
Hence
\begin{eqnarray*}
\frac{B^{2}(z)}{A(z)} & = & q+q^{-1}+2\\
 & = & 2\Re q+2
\end{eqnarray*}
Thus $z\in\mathcal{C}$ since $q$ is an $(m+1)$-th root of unity. 

To show the density of the roots of $H_{m}(z)$, we let $\zeta\in\mathcal{C}$
and $U$ be an open neighborhood of $\zeta$ such that the rational
function $B^{2}(z)/A(z)$ is analytic on $U$. Since the set of $(m+1)$-th
roots of unity is dense on the unit circle as $m\rightarrow\infty$,
the set of values $2\Re q+2$ is dense on the interval $[0,4]$ where
$q$ is an $(m+1)$-th root of unity. By the open mapping theorem,
the map $B^{2}(z)/A(z)$ maps $U$ to an open set containing some
point $2\Re q+2$ for large $m$. Thus there is some point $z\in U$
such that 
\[
\frac{B^{2}(z)}{A(z)}=q+q^{-1}+2
\]
or 
\[
\mathrm{Disc}_{t}(A(z)t^{2}+B(z)t+1;q)=0.
\]
This implies that $q$ is the quotient of the two roots of $A(z)t^{2}+B(z)t+1$.
Hence $z$ is a root of $H_{m}(z)$ by (\ref{eq:quadraticT}).

\end{proof}

The root distribution of various sequences of polynomials with their
generating functions are given by figures below.

\begin{center}
\begin{figure}[H]
\begin{centering}
\includegraphics[scale=0.5]{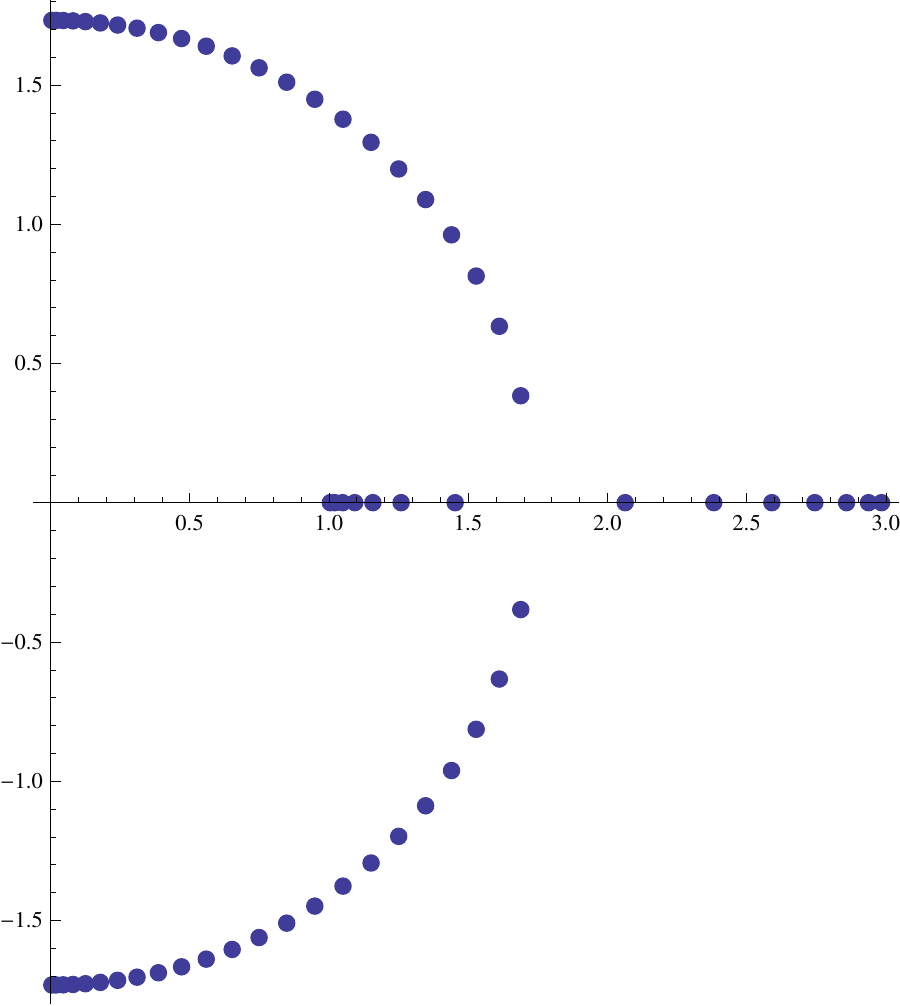}
\par\end{centering}

\caption{Roots of $H_{30}(z)$ with $\sum H_{m}(z)t^{m}=1/(z^{2}t^{2}+(z^{2}-2z+3)t+1)$}
\end{figure}

\par\end{center}

\begin{center}
\begin{figure}[H]
\begin{centering}
\includegraphics[scale=0.5]{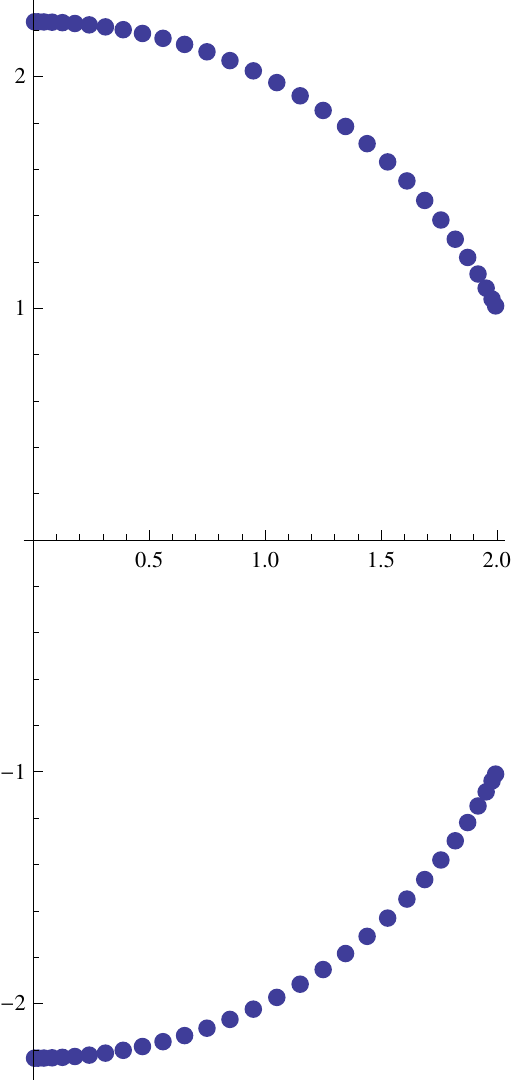}
\par\end{centering}

\caption{Roots of $H_{30}(z)$ with $\sum H_{m}(z)t^{m}=1/(z^{2}t^{2}+(z^{2}-2z+5)t+1)$}
\end{figure}

\par\end{center}

\begin{figure}[H]
\begin{centering}
\includegraphics[scale=0.5]{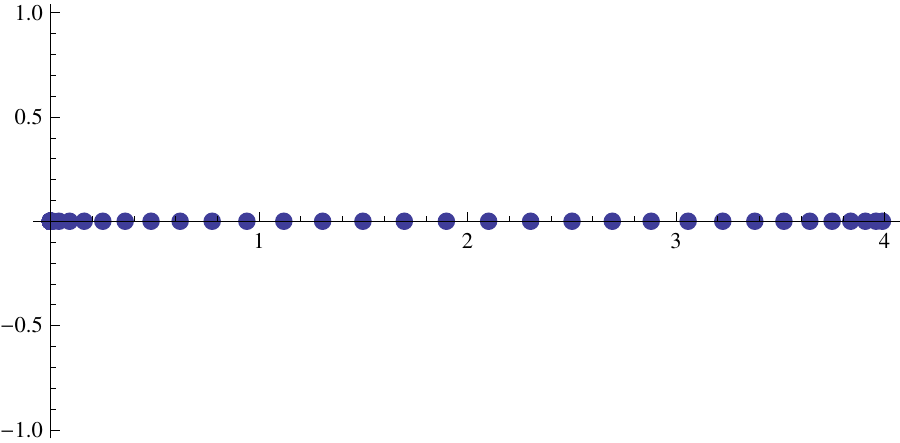}
\par\end{centering}

\caption{Roots of $H_{30}(z)$ with $\sum H_{m}(z)t^{m}=1/(z^{2}t^{2}+(z^{2}-2z)t+1)$}
\end{figure}

In the next section, we will analyze the pair correlation of the roots
of $\prod_{m=1}^{n}H_{m}(z)$, counting multiplicities.

\section{Pair correlation on the corresponding curve}

In this section, we will prove Theorem \ref{maintheorem}. In the
proof of Theorem \ref{quadratic}, the map $h(z)$ maps a root of
$H_{m}(z)$ to $q+q^{-1}+2$ where $q$ is a $(m+1)$-th root of unity.
So it is essential to consider the pair correlation function of the
sequence of finite sequences $M(Q)=\{x_{1},\ldots,x_{N}\}=\{1/2,1/3,2/3,1/4,2/4,3/4,1/5,2/5,3/5,4/5,\ldots\}$
on a subinterval $I=[a,b]$ of $(0,1)$. Each finite sequence is a
sequence of $N=Q(Q-1)/2$ numbers $p/q$ with $1\le p<q$ and $q\le Q$.
Let $M_{I}(Q)=M(Q)\cap I$. The number of points $N_{I}(Q)$ in the
interval $I$ is
\begin{eqnarray*}
N_{I}(Q) & = & \sum_{q\le Q}\sum_{qa\le p\le qb}1\\
 & = & \sum_{q\le Q}q(b-a)+O(1)\\
 & = & \frac{(b-a)Q^{2}}{2}+O(Q)\\
 & = & N(b-a)+O(Q).
\end{eqnarray*}
 We consider the quantity 
\begin{eqnarray*}
R_{M_{I}(Q)}(\lambda) & = & \frac{1}{2N(b-a)}\#\left\{ (x_{i},x_{j})\in M_{I}(Q):i\ne j,0<|x_{i}-x_{j}|\le\frac{\lambda}{N}\right\} .
\end{eqnarray*}
Let the limiting pair correlation measure $R_{M_{I}}(\lambda)=\lim_{Q\rightarrow\infty}R_{M_{I}(Q)}(\lambda)$
if the limit exists. If moreover $R_{M_{I}(Q)}(\lambda)$ can be written
in the form 
\[
R_{M_{I}}(\lambda)=\int_{0}^{\lambda}g_{I}(x)dx,
\]
the function $g_{I}(\lambda)$ is called the pair correlation function
associated to the given sequence of finite sets $\left(M_{I}(Q)\right)_{Q\in\mathbb{N}}$.
We will need the following lemma. The proof of this lemma is similar
to that of Lemma 1 in \cite{axz-1}. 

\begin{lemma}

For any subinterval $I\subset(0,1)$, the pair correlation function
of the sequence of finite sets $\left(M_{I}(Q)\right)_{Q\in\mathbb{N}}$
exists and is given by 
\[
g_{I}(x)=\frac{6}{\pi^{2}x^{2}}\sum_{1\le k\le2x}\sigma(k)\log\frac{2x}{k}
\]
for any $x>0$. 

\end{lemma}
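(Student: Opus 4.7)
The plan is to count the pairs directly and then differentiate with respect to $\lambda$, following the blueprint of \cite{axz-1}. By symmetry in $i,j$,
\[
2N(b-a)R_{M_I(Q)}(\lambda) = 2\cdot\#\!\left\{\left(\tfrac{p_1}{q_1},\tfrac{p_2}{q_2}\right)\in M_I(Q)^2 : \tfrac{p_1}{q_1}<\tfrac{p_2}{q_2},\ \tfrac{p_2}{q_2}-\tfrac{p_1}{q_1}\le\tfrac{2\lambda}{Q(Q-1)}\right\}.
\]
Setting $k:=p_2 q_1-p_1 q_2\in\mathbb{Z}_{\ge 1}$, the spacing condition becomes $k\le 2\lambda q_1 q_2/(Q(Q-1))$, which forces $1\le k\le 2\lambda+o(1)$; only finitely many $k$ contribute.

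For each fixed triple $(q_1,q_2,k)$, the diophantine equation $p_2 q_1-p_1 q_2=k$ with $0<p_i<q_i$ admits solutions exactly when $d:=\gcd(q_1,q_2)$ divides $k$, and then the admissible $p_1$ modulo $q_1$ form $d$ residue classes equally spaced by $q_1/d$. Since $p_2=(p_1 q_2+k)/q_1$, the upper bound $p_2<q_2$ costs at most $O(1)$, and the constraint $p_1/q_1\in I=[a,b]$ reduces the count to $d(b-a)+O(1)$. The companion condition $p_2/q_2\in I$ is automatic up to an $O(\lambda/Q)$ boundary loss, since $|p_1/q_1-p_2/q_2|=O(1/Q^2)$.

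Writing $q_1=d\alpha$, $q_2=d\beta$ with $\gcd(\alpha,\beta)=1$ and $\alpha,\beta\le Q/d$, the constraint $q_1 q_2\ge kQ(Q-1)/(2\lambda)$ becomes $\alpha\beta\ge(k/(2\lambda))T^2+o(T^2)$ with $T=Q/d$. Invoking the standard asymptotic
\[
\#\bigl\{(\alpha,\beta)\in[1,T]^2:\gcd(\alpha,\beta)=1,\ \alpha\beta\ge\gamma T^2\bigr\}=\frac{6}{\pi^2}T^2\bigl(1-\gamma+\gamma\log\gamma\bigr)+o(T^2),
\]
with $\gamma=k/(2\lambda)$, and collapsing the inner sum via $\sum_{d\mid k}1/d=\sigma(k)/k$, produces
\[
R_{M_I(Q)}(\lambda)\longrightarrow\frac{12}{\pi^2}\sum_{1\le k\le 2\lambda}\frac{\sigma(k)}{k}\left(1-\frac{k}{2\lambda}+\frac{k}{2\lambda}\log\frac{k}{2\lambda}\right).
\]
A direct differentiation in $\lambda$ gives $\dfrac{d}{d\lambda}A(k/(2\lambda))=(k/(2\lambda^2))\log(2\lambda/k)$, so that $g_I(\lambda)=(6/(\pi^2\lambda^2))\sum_{1\le k\le 2\lambda}\sigma(k)\log(2\lambda/k)$, as claimed.

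The main obstacle will be controlling the error terms. Naively, the $O(1)$ discretization error in the count of admissible $p_1$, summed over the $O(\lambda Q^2)$ valid triples $(q_1,q_2,k)$ with $d\mid k$, produces an error of the same order as the main term $(b-a)Q^2$. One must therefore either exploit cancellation after summing over $k$, or appeal to a uniform equidistribution statement for the admissible residues modulo $q_1$ as $(q_1,q_2)$ varies, exactly as in \cite{axz-1}. A milder technical point is that the coprime-hyperbola asymptotic must be uniform as $\gamma\to 1^{-}$; since $1-\gamma+\gamma\log\gamma$ vanishes quadratically at $\gamma=1$, this boundary regime is harmless and can be absorbed with a dyadic decomposition.
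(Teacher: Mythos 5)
Your bookkeeping of the main term is correct and is a genuinely different route from the paper's: you count sharp-cutoff pairs directly via $k=p_2q_1-p_1q_2$, fix $(q_1,q_2,k)$, count admissible $p_1$ in a residue class mod $q_1/d$, and then use the coprime hyperbola asymptotic with the collapse $\sum_{d\mid k}1/d=\sigma(k)/k$; the resulting expression $\frac{12}{\pi^2}\sum_{k\le 2\lambda}\frac{\sigma(k)}{k}\bigl(1-\tfrac{k}{2\lambda}+\tfrac{k}{2\lambda}\log\tfrac{k}{2\lambda}\bigr)$ is indeed $\int_0^\lambda g_I(x)\,dx$ for the paper's $g_I$, so the formal answer checks out. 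However, there is a genuine gap, and it is exactly the one you concede in your closing paragraph: each admissible triple carries an $O(1)$ discretization error, the number of contributing triples is $\asymp\lambda Q^2$ (dominated by $d=1$, where the main term per triple is only $b-a<1$), so the naively summed error is of the same order as the main term $\asymp\lambda(b-a)Q^2$. Saying that one must ``exploit cancellation'' or ``appeal to a uniform equidistribution statement'' is naming the missing ingredient, not supplying it. Concretely, writing $q_1=d\alpha$, $q_2=d\beta$, $k=dk'$, the admissible class is $p_1\equiv -k'\bar{\beta}\pmod{\alpha}$, and what you need is equidistribution of $-k'\bar{\beta}\bmod\alpha$ as $\beta$ runs over the relevant range coprime to $\alpha$, uniformly in $\alpha\le Q$; this rests on Kloosterman-sum/Weil-type bounds for incomplete sums and is the analytic heart of the lemma. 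Without it you have established neither the existence of the limit $R_{M_I}(\lambda)$ nor its value.

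For comparison, the paper sidesteps the sharp-cutoff difficulty by smoothing first: it introduces smooth compactly supported $G,H$, expands $S=\sum_{x,y\in M(Q)}h(x-y)g(x)g(y)$ in Fourier series, removes the non-divisible terms by Poisson summation, and after a change of variables reduces everything to a weighted count of coprime pairs $(d_1,d_2)$ with an extra condition on $b$, which is handled by quoting Lemma 8 of \cite{bcz}. That lemma is precisely where the cancellation you need is packaged (its proof uses Weil's bound), and the price of smoothing is only the final passage of $G,H$ to characteristic functions. So to complete your argument you must either import an equidistribution/Kloosterman estimate of that strength for the sharp count (as in \cite{bcz} and \cite{axz}), or smooth as the paper and \cite{axz-1} do; your observation about uniformity of the hyperbola asymptotic near $\gamma=1$ is a minor point by comparison, and as written the decisive step is asserted rather than proved.
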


\begin{proof}

Let $G$ and $H$ be smooth functions defined on $\mathbb{R}$
with $\mathrm{Supp}\,G\subset(0,1)$
and $\mathrm{Supp} \,H\subset(0,\Lambda)$. Here
$\mathrm{Supp}\,G$
denotes the closure in $\mathbb{R}$ of the set of points where
$G$ is nonzero, 
and similarly for $\mathrm{Supp} \,H$.
Let 
\begin{eqnarray*}
h(y) & = & \sum_{n\in\mathbb{Z}}H(N(y+n)),\\
g(y) & = & \sum_{n\in\mathbb{Z}}G(y+n)
\end{eqnarray*}
and 
\[
S=\sum_{x,y\in M(Q)}h(x-y)g(x)g(y).
\]
We note that the summations in the definitions of $h(y)$ and $g(y)$
contain only finitely many nonzero terms. This allows us to interchange summations
in the subsequent computations. The functions $h(y)$, $g(y)$ have
Fourier series expansions,
\[
h(y)=\sum_{n\in\mathbb{Z}}c_{n}e(ny)
\]
and
\[
g(y)=\sum_{n\in\mathbb{Z}}a_{n}e(ny).
\]

By substituting these Fourier expansions into the formula of $S$,
we obtain 
\begin{eqnarray*}
S & = & \sum_{x,y\in M(Q)}\sum_{m}c_{m}e(m(x-y))\sum_{n}a_{n}e(nx)\sum_{r}a_{r}e(ry)\\
 & = & \sum_{m,n,r}c_{m}a_{n}a_{r}\sum_{x\in M(Q)}e((m+n)x)\sum_{y\in M(Q)}e((r-m)y)\\
 & = & \sum_{m,n,r}c_{m}a_{n}a_{r}\sum_{q\le Q}\sum_{1\le p<q}e\left(\frac{(m+n)p}{q}\right)\sum_{q\le Q}\sum_{1\le p<q}e\left(\frac{(r-m)p}{q}\right)\\
 & = & \sum_{m,n,r}c_{m}a_{n}a_{r}\left(\sum_{\substack{q\le Q\\
q|m+n
}
}\sum_{1\le p<q}e\left(\frac{(m+n)p}{q}\right)+\sum_{\substack{q\le Q\\
q\nmid m+n
}
}\sum_{1\le p<q}e\left(\frac{(m+n)p}{q}\right)\right)\\
 &  & \times\left(\sum_{\substack{q\le Q\\
q|r-m
}
}\sum_{1\le p<q}e\left(\frac{(r-m)p}{q}\right)+\sum_{\substack{q\le Q\\
q\nmid r-m
}
}\sum_{1\le a<q}e\left(\frac{(r-m)p}{q}\right)\right).
\end{eqnarray*}
Using the fact that for any integer $l$,
\[
\sum_{1\le p\le q}e\left(\frac{lp}{q}\right)=\begin{cases}
0 & \text{if }q\nmid p\\
q & \mbox{if }q|p
\end{cases},
\]
we obtain
\begin{eqnarray*}
S & = & \sum_{m,n,r}c_{m}a_{n}a_{r}\left(\sum_{\substack{q\le Q\\
q|m+n
}
}(q-1)-\sum_{\substack{q\le Q\\
q\nmid m+n
}
}1\right)\left(\sum_{\substack{q\le Q\\
q|r-m
}
}(q-1)-\sum_{\substack{q\le Q\\
q\nmid r-m
}
}1\right)\\
 & = & \sum_{m,n,r}c_{m}a_{n}a_{r}\left(\sum_{\substack{q\le Q\\
q|m+n
}
}q-Q\right)\left(\sum_{\substack{q\le Q\\
q|r-m
}
}q-Q\right)\\
 & = & \sum_{m,n,r}c_{m}a_{n}a_{r}\sum_{\substack{q\le Q\\
q|m+n
}
}q\sum_{\substack{q\le Q\\
q|r-m
}
}q-Q\sum_{m,n,r}c_{m}a_{n}a_{r}\sum_{\substack{q\le Q\\
q|m+n
}
}q\\
 &  & -Q\sum_{m,n,r}c_{m}a_{n}a_{r}\sum_{\substack{q\le Q\\
q|r-m
}
}q+Q^{2}\sum_{m,n,r}c_{m}a_{n}a_{r}.
\end{eqnarray*}

By the Poisson summation formula (see \cite[Theorem 2.2,  page 37]{borwein}
or \cite{guinand} for a generalized version), we have 
\[
\sum_{r\in\mathbb{Z}}a_{r}=\sum_{r\in\mathbb{Z}}g(r)=0
\]
 since $\mathrm{Supp}G\subset(0,1)$. With this fact, $S$ is reduced
to 
\[
S=\sum_{m,n,r}c_{m}a_{n}a_{r}\sum_{\substack{q\le Q\\
q|m+n
}
}q\sum_{\substack{q\le Q\\
q|r-m
}
}q.
\]
With the change in the index of summation given by $m'=m+n$, $n'=r-m$
and $m=r'$, we obtain 
\begin{eqnarray*}
S & = & \sum_{m',n',r'}c_{r'}a_{m'-r'}a_{n'+r'}\sum_{\substack{q\le Q\\
q|m'
}
}q\sum_{\substack{q\le Q\\
q|n'
}
}q\\
 & = & \sum_{q_{1},q_{2}\le Q}q_{1}q_{2}\sum_{\substack{m',n',r'\\
q_{1}|m',q_{2}|n'
}
}c_{r'}a_{m'-r'}a_{n'+r'}\\
 & = & \sum_{q_{1},q_{2}\le Q}q_{1}q_{2}\sum_{r'\in\mathbb{Z}}c_{r'}\sum_{m}a_{mq_{1}-r'}\sum_{n}a_{nq_{2}+r'}.
\end{eqnarray*}
The formula of the coefficients of the Fourier series of $G(t)$ gives
\begin{eqnarray*}
a_{mq_{1}-r} & = & \int_{\mathbb{R}}G(t)e(-(mq_{1}-r)t)dt\\
 & = & \int_{\mathbb{R}}G(t)e(rt)e(-mq_{1}t)dt\\
 & = & \int_{\mathbb{R}}\frac{1}{q_{1}}G\left(\frac{t}{q_{1}}\right)e\left(\frac{rt}{q_{1}}\right)e(-mt)dt.
\end{eqnarray*}
By the Poisson summation formula, 
\[
\sum_{m}a_{mq_{1}-r}=\sum_{m\in\mathbb{Z}}\frac{1}{q_{1}}G\left(\frac{m}{q_{1}}\right)e\left(\frac{rm}{q_{1}}\right).
\]
Following a similar procedure, one obtains
\[
\sum_{m}a_{nq_{2}+r}=\sum_{n\in\mathbb{Z}}\frac{1}{q_{2}}G\left(\frac{n}{q_{2}}\right)e\left(\frac{-rn}{q_{2}}\right).
\]
Applying these two equations to the formula of $S$, we have 
\begin{eqnarray*}
S & = & \sum_{q_{1},q_{2}\le Q}q_{1}q_{2}\sum_{r\in\mathbb{Z}}c_{r}\sum_{m\in\mathbb{Z}}\frac{1}{q_{1}}G\left(\frac{m}{q_{1}}\right)e\left(\frac{rm}{q_{1}}\right)\sum_{n\in\mathbb{Z}}\frac{1}{q_{2}}G\left(\frac{n}{q_{2}}\right)e\left(\frac{-rn}{q_{2}}\right)\\
 & = & \sum_{q_{1},q_{2}\le Q}\sum_{m,n\in\mathbb{Z}}G\left(\frac{m}{q_{1}}\right)G\left(\frac{n}{q_{2}}\right)\sum_{r}c_{r}e\left(\left(\frac{m}{q_{1}}-\frac{n}{q_{2}}\right)r\right)\\
 & = & \sum_{q_{1},q_{2}\le Q}\sum_{m,n\in\mathbb{Z}}G\left(\frac{m}{q_{1}}\right)G\left(\frac{n}{q_{2}}\right)\sum_{r}H\left(N\left(r+\frac{m}{q_{1}}-\frac{n}{q_{2}}\right)\right).
\end{eqnarray*}
Since $\mathrm{Supp}G\subset(0,1)$ , $\mathrm{Supp}H\subset(0,\Lambda)$
and $N\sim Q^{2}/2$, we have that if $r\ne0$ then $G(m/q_{1})G(n/q_{2})H(N(r+m/q_{1}-n/q_{2}))=0$
when $N$ is large. Let $\delta=(q_{1},q_{2})$, $q_{1}=d_{1}\delta$
and $q_{2}=d_{2}\delta$. Let $a,b$ satisfy $ad_{1}+bd_{2}=1$ with
$0<b<d_{1}$. We consider a change in the index of summation given
by $m=bm'+d_{1}n'$ and $n=-am'+d_{2}n'$. This yields
\begin{eqnarray*}
S & = & \sum_{\substack{d_{1}\delta,d_{2}\delta\le Q\\
(d_{1},d_{2})=1
}
}\sum_{m,n\in\mathbb{Z}}G\left(\frac{bm}{d_{1}\delta}+\frac{n}{\delta}\right)G\left(\frac{-am}{d_{2}\delta}+\frac{n}{\delta}\right)H\left(\frac{Nm}{d_{1}d_{2}\delta}\right)\\
 & = & \sum_{\substack{d_{1}\delta,d_{2}\delta\le Q\\
(d_{1},d_{2})=1
}
}\sum_{m,n\in\mathbb{Z}}G\left(\frac{1}{\delta}\left(\frac{bm}{d_{1}}+n\right)\right)G\left(\frac{1}{\delta}\left(\frac{bm}{d_{1}}+n-\frac{m}{d_{1}d_{2}}\right)\right)H\left(\frac{Nm}{d_{1}d_{2}\delta}\right).
\end{eqnarray*}
Since $\mathrm{Supp}H\subset(0,\Lambda)$ and $N\ge(1-\epsilon)Q^{2}/2$
for large $Q$, the summand is nonzero only if 
\[
m\delta\le2\Lambda/(1-\epsilon).
\]
By choosing $\epsilon$ sufficiently small, we have 
\[
m\le2\Lambda/\delta.
\]
This inequality and the fact that $\mathrm{Supp}G\subset(0,1)$ imply
that the summand is nonzero for finitely many values of $n$, which
satisfy
\[
0<\frac{1}{\delta}\left(\frac{bm}{d_{1}}+n\right)<1.
\]
From the fact that 
\[
G\left(\frac{1}{\delta}\left(\frac{bm}{d_{1}}+n-\frac{m}{d_{1}d_{2}}\right)\right)=G\left(\frac{1}{\delta}\left(\frac{bm}{d_{1}}+n\right)\right)+O\left(\frac{m}{\delta d_{1}d_{2}}\right),
\]
we have 
\begin{eqnarray*}
S & = & \sum_{\substack{d_{1}\delta,d_{2}\delta\le Q\\
(d_{1},d_{2})=1
}
}\sum_{\substack{m\delta\le2\Lambda,n}
}G\left(\frac{1}{\delta}\left(\frac{bm}{d_{1}}+n\right)\right)^{2}H\left(\frac{Nm}{d_{1}d_{2}\delta}\right)+O(\log^{2}Q)\\
 & = & \sum_{\substack{m\delta\le2\Lambda,n}
}\sum_{\substack{d_{1},d_{2}\le Q/\delta\\
(d_{1},d_{2})=1
}
}f_{n}\left(\frac{b}{d_{1}}\right)H\left(\frac{Nm}{d_{1}d_{2}\delta}\right)
\end{eqnarray*}
where 
\[
f_{n}(x):=G\left(\frac{1}{\delta}(mx+n)\right)^{2}.
\]
Let $K$ be a large positive integer whose value (as a function of
$Q$) will be given later. The formula of $S$ above becomes 
\begin{eqnarray*}
S & = & \sum_{\substack{m\delta\le2\Lambda\\
n\in\mathbb{Z}
}
}\sum_{i=1}^{K}\sum_{\substack{d_{1},d_{2}\le Q/\delta\\
(d_{1},d_{2})=1\\
b\in[d_{1}i/K,d_{1}(i+1)/K]
}
}f_{n}\left(\frac{b}{d_{1}}\right)H\left(\frac{Nm}{d_{1}d_{2}\delta}\right)+O(\log^{2}Q)\\
 & = & \sum_{\substack{m\delta\le2\Lambda\\
n\in\mathbb{Z}
}
}\sum_{i=1}^{K}\sum_{\substack{d_{1},d_{2}\le Q/\delta\\
(d_{1},d_{2})=1\\
b\in[d_{1}i/K,d_{1}(i+1)/K]
}
}\left(f_{n}\left(\frac{i}{K}\right)+O\left(\frac{1}{K}\right)\right)H\left(\frac{Nm}{d_{1}d_{2}\delta}\right)+O(\log^{2}Q)\\
 & = & \sum_{\substack{m\delta\le2\Lambda\\
n\in\mathbb{Z}
}
}\sum_{i=1}^{K}f_{n}\left(\frac{i}{K}\right)\sum_{\substack{d_{1},d_{2}\le Q/\delta\\
(d_{1},d_{2})=1\\
b\in[d_{1}i/K,d_{1}(i+1)/K]
}
}H\left(\frac{Nm}{d_{1}d_{2}\delta}\right)+O\left(\frac{Q^{2}}{K}\right)+O(\log^{2}Q).
\end{eqnarray*}
We note that if $\min(d_{1},d_{2})\le Q^{1-\epsilon}$ then the main
term is 0 for large $Q$. If $\min(d_{1},d_{2})>Q^{1-\epsilon}$ then
the chain rule implies that $\|DH\|{}_{\infty}\ll1/Q^{1-3\epsilon}$.
Then Lemma 8 in \cite{bcz} gives
\begin{eqnarray*}
\sum_{\substack{d_{1},d_{2}\le Q/\delta\\
(d_{1},d_{2})=1\\
b\in[d_{1}i/K,d_{1}(i+1)/K]
}
}H\left(\frac{Nm}{d_{1}d_{2}\delta}\right) & = & \frac{6}{\pi^{2}K}\iint_{Q^{1-\epsilon}\le x,y\le Q/\delta}H\left(\frac{Nm}{xy\delta}\right)dxdy+O(Q^{3/2+\epsilon})\\
 & = & \frac{6}{\pi^{2}K}\iint_{Q^{1-\epsilon}\le x,y\le Q/\delta}H\left(\frac{Q^{2}m}{2xy\delta}\right)dxdy+O(Q^{3/2+\epsilon})\\
 & = & \frac{6Q^{2}}{\pi^{2}K}\iint_{Q^{-\epsilon}\le x,y\le1/\delta}H\left(\frac{m}{2xy\delta}\right)dxdy+O(Q^{3/2+\epsilon}).
\end{eqnarray*}
If $\min(x,y)\le Q^{-\epsilon}$ then the integrand is 0 when $Q$
is large. Thus the expression becomes
\[
\frac{6Q^{2}}{\pi^{2}K}\iint_{0\le x,y\le1/\delta}H\left(\frac{m}{2xy\delta}\right)dxdy+O(Q^{3/2+\epsilon}).
\]
Putting this expression back into the formula of $S$, we have 
\begin{eqnarray*}
S & = & \frac{6Q^{2}}{\pi^{2}}\sum_{\substack{m\delta\le2\Lambda\\
n\in\mathbb{Z}
}
}\sum_{i=1}^{K}f_{n}\left(\frac{i}{K}\right)\frac{1}{K}\iint_{0\le x,y\le1/\delta}H\left(\frac{m}{2xy\delta}\right)dxdy+O(KQ^{3/2+\epsilon})+O\left(\frac{Q^{2}}{K}\right)\\
 & = & \frac{6Q^{2}}{\pi^{2}}\sum_{\substack{m\delta\le2\Lambda\\
n\in\mathbb{Z}
}
}\int_{0}^{1}f_{n}(x)dx\iint_{0\le x,y\le1/\delta}H\left(\frac{m}{2xy\delta}\right)dxdy+O(KQ^{3/2+\epsilon})+O\left(\frac{Q^{2}}{K}\right).
\end{eqnarray*}
The definition of $f_{n}(x)$ yields 
\begin{eqnarray*}
\sum_{n\in\mathbb{Z}}\int_{0}^{1}f_{n}(x)dx & = & \sum_{n\in\mathbb{Z}}\int_{0}^{1}G\left(\frac{1}{\delta}(mx+n)\right)^{2}dx\\
 & = & \frac{\delta}{m}\sum_{n\in\mathbb{Z}}\int_{n/\delta}^{(m+n)/\delta}G(z)^{2}dz\\
 & = & \delta\int_{0}^{1}G(z)^{2}dz.
\end{eqnarray*}
Choosing $K=[Q^{1/4}]$, we obtain 
\[
S=\frac{6Q^{2}}{\pi^{2}}\left(\int_{0}^{1}G(z)^{2}dz\right)\sum_{m\delta\le2\Lambda}\delta\int_{0\le x,y\le1/\delta}H\left(\frac{m}{2xy\delta}\right)dxdy+O(Q^{7/4+\epsilon}).
\]
Let $\lambda=m/2\delta xy$. The main term is 
\begin{eqnarray*}
 &  & \frac{6Q^{2}}{\pi^{2}}\left(\int_{0}^{1}G(z)^{2}dz\right)\sum_{\delta\le Q}\sum_{m\le2\Lambda/\delta}\delta\int_{0}^{1/\delta}\int_{m/2x}^{\Lambda}H(\lambda)\frac{m}{2\delta x\lambda^{2}}d\lambda dx\\
 & = & \frac{3Q^{2}}{\pi^{2}}\left(\int_{0}^{1}G(z)^{2}dz\right)\sum_{\delta\le Q}\sum_{m\delta/2\le\Lambda}\int_{m\delta/2}^{\Lambda}\int_{m/2\lambda}^{1/\delta}\frac{H(\lambda)m}{x\lambda^{2}}dxd\lambda\\
 & = & \frac{3Q^{2}}{\pi^{2}}\left(\int_{0}^{1}G(z)^{2}dz\right)\sum_{\delta\le Q}\sum_{m\delta/2\le\Lambda}\int_{m\delta/2}^{\Lambda}\frac{H(\lambda)m}{\lambda^{2}}\log\frac{2\lambda}{m\delta}d\lambda.
\end{eqnarray*}
Let $k=m\delta$. The expression becomes
\begin{eqnarray*}
 &  & \frac{3Q^{2}}{\pi^{2}}\left(\int_{0}^{1}G(z)^{2}dz\right)\sum_{0<k\le2\Lambda}\int_{k/2}^{\Lambda}\frac{H(\lambda)}{\lambda^{2}}\log\frac{2\lambda}{k}\sum_{m|k}m\\
 & = & \frac{3Q^{2}}{\pi^{2}}\left(\int_{0}^{1}G(z)^{2}dz\right)\int_{0}^{\Lambda}\frac{H(\lambda)}{\lambda^{2}}\sum_{1\le k\le2\lambda}\sigma(k)\log\frac{2\lambda}{k}d\lambda.
\end{eqnarray*}
We divide the expression by $N(b-a)$ and let $Q\rightarrow\infty.$
The lemma follows after letting $G$ and $H$ approach the characteristic
function of $I$ and $(0,\Lambda)$ respectively.

\end{proof}

We will apply the following theorem which is Theorem 2 from \cite{axz-1}.

\begin{theorem}

Suppose $\mathcal{F}=(\mathcal{F}(Q))_{Q\in\mathbb{N}}$ is a sequence
of finite sequences of points on a closed interval $I$ with $\mathcal{F}(Q)=\{t_{j}^{Q}:1\le j\le N_{Q}\}$.
Let $\mathcal{C}\subset\mathbb{R}^{k}$ be a curve with parametrization
$f:I\rightarrow\mathcal{C}$, where the function $f$ is continuous,
piecewise continuously differentiable, and $f'$ does not vanish in
$I$. Denoting $x_{j}^{Q}=f(t_{j}^{Q})$, we form a sequence of finite
sequences of points on $\mathcal{C}$ by letting $\mathcal{M}(Q)=\{x_{j}^{Q}:1\le j\le N_{Q}\}$
and $\mathcal{M}=\{\mathcal{M}(Q)\}_{Q\in\mathbb{N}}$. Suppose $\mathcal{F}$
is uniformly distributed on $I$ and for any subinterval $I'$ of
$I$, the sequence of functions $R_{\mathcal{F}_{I'}(Q)}(\lambda)$
converges pointwise as $Q\rightarrow\infty$ to a continuous function
$R_{\mathcal{F}_{I'}}(\lambda)$ which is independent of the interval
$I'$. Then the limiting pair correlation measure of $\mathcal{M}$
on the curve $\mathcal{C}$ exists and is given by
\[
R_{\mathcal{M}_{\mathcal{C}}}(\lambda)=\frac{1}{|I|}\int_{I}R_{\mathcal{F}_{I}}\left(\frac{l(\mathcal{C})}{|I||f'(t)|}\lambda\right)dt
\]
where $l(\mathcal{C})$ is the length of the curve $\mathcal{C}$. 

\end{theorem}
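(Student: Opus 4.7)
The plan is to reduce the pair correlation on $\mathcal{C}$ to the given pair correlation behavior on $I$ by partitioning $I$ into small pieces on which $f$ acts like an affine stretching with factor $|f'|$. Fix $\epsilon > 0$. Using uniform continuity of $|f'|$ on each of the finitely many pieces where it is continuous, I would partition $I$ into subintervals $I_1, \ldots, I_M$ on each of which $|f'|$ varies by at most $\epsilon$ (with arbitrarily small neighborhoods around the few discontinuities of $f'$ discarded, contributing only $O(\epsilon)$ of the mass by uniform distribution). For a reference point $t_r \in I_r$, the mean value theorem yields
\[
d_{\mathcal{C}}(f(t_i), f(t_j)) = |f'(t_r)|\,|t_i - t_j|\,(1 + O(\epsilon))
\]
for all $t_i, t_j \in I_r$, along with $l(\mathcal{C}) = \sum_r |f'(t_r)|\,|I_r|\,(1 + O(\epsilon))$.

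Next, I would count the pairs $(x_i, x_j) \in \mathcal{M}(Q)$, $i \ne j$, with $d_{\mathcal{C}}(x_i, x_j) \le \lambda\, l(\mathcal{C})/N_Q$ by splitting according to which subintervals contain the preimages $t_i, t_j$. Cross-interval pairs require both preimages to lie within $O(1/N_Q)$ of a common subinterval endpoint, so by uniform distribution of $\mathcal{F}$ they total $O(M)$ for fixed $M$ and contribute $o(1)$ after division by $2 N_Q$. For pairs with both preimages in $I_r$, the geometric estimate converts the threshold on $\mathcal{C}$ into $|t_i - t_j| \le l(\mathcal{C})\lambda/(N_Q |f'(t_r)|)\,(1+O(\epsilon))$; by the definition of $R_{\mathcal{F}_{I_r}(Q)}$ and uniform distribution (which gives $|\mathcal{F}(Q) \cap I_r| = N_Q |I_r|/|I| + o(N_Q)$), the number of such pairs equals $(2 N_Q |I_r|/|I| + o(N_Q)) \cdot R_{\mathcal{F}_{I_r}(Q)}\!\left(\frac{l(\mathcal{C})\lambda\,(1+O(\epsilon))}{|I|\,|f'(t_r)|}\right)$. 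Summing over $r$ and dividing by $2 N_Q$ gives
\[
R_{\mathcal{M}_{\mathcal{C}}(Q)}(\lambda) = \sum_{r=1}^M \frac{|I_r|}{|I|}\, R_{\mathcal{F}_{I_r}(Q)}\!\left(\frac{l(\mathcal{C})\lambda\,(1 + O(\epsilon))}{|I|\,|f'(t_r)|}\right) + o(1).
\]

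Finally, I would let $Q \to \infty$ with $M$ and $\epsilon$ held fixed; the hypothesis that $R_{\mathcal{F}_{I'}(Q)} \to R_{\mathcal{F}_I}$ pointwise and independently of the subinterval $I' \subset I$ turns the right-hand side into a Riemann sum for $\frac{1}{|I|}\int_I R_{\mathcal{F}_I}\!\left(\frac{l(\mathcal{C})\lambda}{|I|\,|f'(t)|}\right) dt$, up to the $O(\epsilon)$ perturbation inside the argument. Letting $M \to \infty$ and then $\epsilon \to 0$, and using continuity of $R_{\mathcal{F}_I}$ to absorb the perturbation, yields the claimed formula. The main obstacle I foresee is the careful ordering and uniform control of the three limits: the $(1+O(\epsilon))$ perturbation in the argument must be handled uniformly across the partition, and the uniform-distribution error $o(N_Q)$ per subinterval must not accumulate as $M$ grows. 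Freezing $M$ and $\epsilon$ before sending $Q \to \infty$, and only afterwards letting $M \to \infty$ and $\epsilon \to 0$, is essential; the continuity of the limiting $R_{\mathcal{F}_I}$ assumed in the hypothesis is exactly what makes this triple limit go through.
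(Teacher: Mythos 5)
First, note that the paper itself gives no proof of this statement: it is quoted verbatim as Theorem 2 of \cite{axz-1} and simply applied, so there is no in-paper argument to compare yours against. That said, your partition-and-transfer strategy (cut $I$ into pieces on which $|f'|$ is nearly constant, convert the distance threshold on $\mathcal{C}$ into a rescaled threshold on the parameter interval, invoke the interval-independence and continuity of the limit $R_{\mathcal{F}_{I'}}$ to recognize a Riemann sum, and only then send $M\to\infty$ and $\epsilon\to 0$) is the natural and standard way to prove such a transference result, and your ordering of the three limits is handled correctly.

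There is one genuine gap: the treatment of the cross-interval pairs. You claim that because such a pair forces both parameters to lie within $O(1/N_Q)$ of a common partition point, uniform distribution of $\mathcal{F}$ bounds their number by $O(M)$. Uniform distribution controls counts only in intervals of \emph{fixed} length as $Q\to\infty$; it says nothing about intervals of length $O(1/N_Q)$, which may contain many points (in the very sequence to which this theorem is applied in the paper, the value $1/2$ occurs in $M(Q)$ with multiplicity on the order of $\sqrt{N}$, so a window of length $O(1/N)$ around it is far from containing $O(1)$ points). The correct way to dispose of these pairs is to surround each of the $O(M)$ partition points (and each discontinuity of $f'$) by a subinterval $I'_p$ of fixed small length $\eta$, observe that every cross pair is a close pair lying inside some $I'_p$, and use the hypothesis that $R_{\mathcal{F}_{I'_p}(Q)}(\cdot)$ converges: this bounds their number by roughly $2N_Q(\eta/|I|)\,R_{\mathcal{F}_{I}}(\lambda')$, which after division by $2N_Q$ is $O(M\eta)$ and can be made smaller than $\epsilon$ by shrinking $\eta$ before letting $Q\to\infty$. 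So the estimate you need is true, but it comes from the pair-correlation hypothesis itself, not from uniform distribution. A second, minor point: for vector-valued $f$ the mean value theorem does not directly yield $d_{\mathcal{C}}(f(t_i),f(t_j))=|f'(t_r)|\,|t_i-t_j|\,(1+O(\epsilon))$; one should instead use the arc-length integral $\int_{t_i}^{t_j}|f'(s)|\,ds$, noting also that Euclidean and arc-length distances agree to first order at the scale $\lambda\, l(\mathcal{C})/N_Q$.
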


The theorem above implies that the limiting pair correlation measure
of roots of $\prod_{k=1}^{m}H_{k}(z)$,
counting multiplicities, exists on $J$ as $m\rightarrow\infty$ and
is given by
\begin{eqnarray*}
\frac{1}{|I|}\int_{I}R_{M_{I}}\left(\frac{l(J)}{|I||f'(t)|}\lambda\right)dt & = & \frac{1}{|I|}\int_{I}\int_{0}^{l(J)\lambda/|I||f'(t)|}g_{M_{I}}(\lambda)d\lambda dt\\
 & = & \frac{l(J)}{|I|^{2}}\int_{0}^{\lambda}\int_{I}g_{M_{I}}\left(\frac{l(J)}{|I||f'(t)|}\lambda\right)\frac{dt}{|f'(t)|}.
\end{eqnarray*}
Hence Theorem \ref{maintheorem} follows.

\section{An explicit example of pair correlation on the corresponding curve}

In this section we will consider the pair correlation for a specific
sequence of polynomials. We have the following result.

\begin{example}

Let $H_{m}(z)$ be a sequence of polynomials satisfying the generating
function 
\[
\sum H_{m}(z)t^{m}=\frac{1}{z^{2}t^{2}+(z^{2}-2z)t+1}.
\]
Then the roots of $H_{m}(z)$ lie on the interval $[0,4)$. The pair
correlation function of the sequence of the roots of $\prod_{m=1}^{n}H_{m}(z)$,
counting multiplicities, on the subinterval $J=[2-2\cos\pi a,2+2\cos\pi a]$
with $0<a<1/2$ exists and is given by
\begin{eqnarray*}
g_{J}(\lambda) & = & \frac{6}{\pi\lambda^{2}\cos\pi a}\sum_{\frac{4\lambda\cos\pi a}{\pi(1-2a)}<k\le\frac{4\lambda\cos\pi a}{\pi(1-2a)\sin a}}\sigma(k)\Biggl(-\sqrt{1-\frac{16\lambda^{2}\cos^{2}\pi a}{\pi^{2}k^{2}(1-2a)^{2}}}+\log\left(1+\sqrt{1-\frac{16\lambda^{2}\cos^{2}\pi a}{\pi^{2}k^{2}(1-2a)^{2}}}\right)\\
 &  & -\log\frac{4\lambda\cos\pi a}{\pi k(1-2a)}\Biggr)+\frac{6}{\pi\lambda^{2}\cos\pi a}\sum_{1\le k\le\frac{4\lambda\cos\pi a}{\pi(1-2a)\sin a}}\sigma(k)\Biggl(\left(1+\log\frac{4\lambda\cos\pi a}{\pi k(1-2a)}\right)\cos\pi a\\
 &  & -\frac{1}{2}\log\frac{1+\cos\pi a}{1-\cos\pi a}-\cos\pi a\log\sin\pi a\Biggr).
\end{eqnarray*}

\end{example}

To show this, let $z=x+iy$. From elementary computations, we note
that 
\begin{eqnarray*}
\Im\frac{B^{2}(z)}{A(z)} & = & \frac{2y(x^{2}+y^{2})P}{(x^{2}+y^{2})^{2}}\\
\mbox{\ensuremath{\Re}}\frac{B^{2}(z)}{A(z)} & = & \frac{P^{2}-Q^{2}}{(x^{2}+y^{2})^{2}}
\end{eqnarray*}
where 
\begin{eqnarray*}
P & = & 2x^{2}+x^{3}-2y^{2}+xy^{2}\\
Q & = & y(x^{2}+y^{2}).
\end{eqnarray*}

From Theorem \ref{quadratic}, we have two cases, $y=0$ or $P=0$.
The condition 
\[
\frac{P^{2}-Q^{2}}{(x^{2}+y^{2})^{2}}\ge0,
\]
implies $y=0$ in both cases. The inequality 
\[
\frac{P^{2}-Q^{2}}{(x^{2}+y^{2})^{2}}\le4
\]
gives $x^{4}(x^{2}-4x)\le0$. This implies that the roots of $H_{m}(z)$
lies on the real interval $[0,4]$. The roots of $H_{30}(z)$ are
given by Figure 3.

The definition of $h(z)$ in Theorem \ref{maintheorem} gives 
\[
h(z)=\frac{B^{2}(z)}{A(z)}=(z-2)^{2}.
\]
The formula $f(t):=h^{-1}(4\cos^{2}\pi t)$ in Theorem \ref{maintheorem}
becomes 
\[
f(t)=2+2\cos\pi t.
\]
This function maps the interval $I=(a,1-a)$ onto $J$ with 
\[
f'(t)=-2\pi\sin\pi t.
\]
Equation (\ref{eq:pcJ}) gives

\begin{eqnarray*}
g_{J}(\lambda) & = & \frac{|J|}{|I|^{2}}\int_{I}g_{I}\left(\frac{|J|}{|I||f'(t)|}x\right)\frac{dt}{|f'(t)|}\\
 & = & \frac{24}{\pi|J|\lambda^{2}}\int_{a}^{1/2}\sum_{1\le k\le|J|\lambda/\pi|I|\sin\pi t}\sigma(k)\sin\pi t\log\frac{|J|\lambda}{|I|k\sin\pi t}\\
 & = & \frac{24}{\pi|J|\lambda^{2}}\sum_{1\le k\le|J|\lambda/\pi|I|}\sigma(k)\int_{a}^{1/2}\sin\pi t\log\frac{|J|\lambda}{|I|k\sin\pi t}dt\\
 &  & +\frac{24}{\pi|J|\lambda^{2}}\sum_{|J|\lambda/\pi I<k\le|J|\lambda/\pi|I|\sin a}\sigma(k)\int_{a}^{t_{k}}\sin\pi t\log\frac{|J|\lambda}{|I|k\sin\pi t}dt
\end{eqnarray*}
where $0\le t_{k}\le1/2$ is the solution to the equation 
\[
\sin\pi t_{k}=\frac{|J|\lambda}{|I|\pi k}.
\]
The integrand in this formula has an antiderivative 
\[
-(1+\log\frac{|J|\lambda}{\pi|I|k})\cos\pi t+\frac{1}{2}\log(\cos\pi t+1)-\frac{1}{2}\log(1-\cos\pi t)+\cos\pi t\log\sin\pi t.
\]
Thus the pair correlation function $g_{J}(\lambda)$ becomes

\begin{eqnarray*}
\frac{24}{\pi|J|\lambda^{2}}\sum_{|J|\lambda/\pi I<k\le|J|\lambda/\pi|I|\sin a}\sigma(k)\left(-\sqrt{1-\frac{|J|^{2}\lambda^{2}}{\pi^{2}|I|^{2}k^{2}}}+\log\left(1+\sqrt{1-\frac{|J|^{2}\lambda^{2}}{\pi^{2}|I|^{2}k^{2}}}\right)-\log\frac{|J|\lambda}{|I|\pi k}\right)\\
+\frac{24}{\pi|J|\lambda^{2}}\sum_{1\le k\le|J|\lambda/\pi|I|\sin a}\sigma(k)\left((1+\log\frac{|J|\lambda}{\pi|I|k})\cos\pi a-\frac{1}{2}\log\frac{1+\cos\pi a}{1-\cos\pi a}-\cos\pi a\log\sin\pi a\right).
\end{eqnarray*}
The corollary follows by noticing that $|J|=4\cos\pi a$ and $|I|=1-2a$. 

We notice that when $a$ approaches 0, the series in the formula of
$g_{J}(\lambda)$ behaves similar to 
\[
\sum_{k=1}^{\infty}\frac{\sigma(k)}{k^{2}}
\]
which diverges. Hence the limiting pair correlation function does
not exist on the whole interval $[0,4]$.

\end{document}